\DeclareMathOperator\Var{Var}
\newcommand{\conv}{\operatorname{conv}}
\newcommand{\tot}{\tfrac{1}{2}} 
\newcommand{\scl}[2]{\langle #1,#2 \rangle} 
\newcommand{\abs}[1]{\left| #1 \right|} 
\newcommand{\set}[1]{\left\{#1\right\}} 
\newcommand{\sset}[1]{\{#1\}} 
\newcommand{\sets}[2]{\sset{#1\,:\,#2}} 
\newcommand{\inds}[1]{ {\mathbf 1}_{\set{#1}}} 
\newcommand{\seq}[1]{\sset{#1_n}_{n\in\N}} 
\newcommand{\sq}[2][n]{\{ #2 \}_{#1\in\N}}  
\newcommand{\net}[1]{(#1_{\alpha})_{\alpha\in A}} 
\newcommand{\netb}[1]{\{#1_{\beta}\}_{\beta\in B}} 
\newcommand{\cd}{\text{c\` adl\` ag} } 
\newcommand{\ft}[2]{#1\dots#2} 
\renewcommand{\ft}[2]{#1,\dots,#2}
\newcommand{\prf}[1]{ ( #1 )_{t\in [0,T]}}
\newcommand{\RN}[2]{\frac{d#1}{d#2}}
\providecommand{\R}{} \renewcommand{\R}{{\mathbb R}}
\newcommand{\N}{{\mathbb N}}
\newcommand{\PP}{{\mathbb P}}
\newcommand{\EE}{{\mathbb E}}
\newcommand{\FF}{{\mathcal F}}
\newcommand{\eps}{\varepsilon}
\newcommand{\ld}{\lambda}
\newcommand{\el}{{\mathbb L}} 
\newcommand{\lzer}{\el^0}
\newcommand{\lone}{\el^1}
\newcommand{\lpee}{\el^p}
\newcommand{\linf}{\el^{\infty}}
\newcommand{\define}[1]{{\em #1}}
\newcommand\sB{{\mathcal B}}
\newcommand\sC{{\mathcal C}}
\newcommand\sD{{\mathcal D}}
\newcommand\sL{{\mathcal L}}
\newcommand\sO{{\mathcal O}}
\newcommand\sQ{{\mathcal Q}}
\newcommand\sU{{\mathcal U}}
\newcommand{\vp}{\varphi}
\DeclareMathOperator\Fin{Fin}
\DeclareMathOperator\Dom{Dom}
\renewcommand{\net}[1]{\set{#1_{\alpha}}_{\alpha\in A}}
\newcommand{\netd}[1]{\set{#1_{\delta}}_{\delta\in \Fin(A)}}
\newcommand{\netdd}[1]{\set{#1_{D}}_{D\in \Fin(A)}}
\newcommand{\hlzer}{\hat{\el}^0}
\numberwithin{equation}{section}
\theoremstyle{plain}                
\newtheorem{theorem}{Theorem}[section]
\newtheorem{lemma}[theorem]{Lemma}
\newtheorem{proposition}[theorem]{Proposition}
\newtheorem{corollary}[theorem]{Corollary}
\theoremstyle{definition}           
\newtheorem{definition}[theorem]{Definition}
\newtheorem{example}[theorem]{Example}
\newtheorem{claim}[theorem]{Claim}
\theoremstyle{remark}
\newtheorem{remark}[theorem]{Remark}
\newcommand{\Opt}{{\sO}}
\newcommand{\Xn}{X^{(n)}}
\newcommand{\hXn}{\hat{X}^{(n)}}
\newcommand{\prt}{{\pi}}
\begin{document}

\begin{center}
\Large\sc
Convex compactness and its applications
\end{center}

\bigskip

\begin{center}
{\bf Gordan {\v Z}itkovi{\' c}}
\\Department of Mathematics
  \\University of Texas at Austin \\1 University Station, Mail Code
  C1200 \\Austin, TX 78712\\
{\tt gordanz@math.utexas.edu\\
 www.ma.utexas.edu/$\sim$gordanz}
\end{center}

\bigskip

\begin{quote}
{\bf Abstract.}
The concept of convex compactness, weaker than the classical notion of
compactness, is introduced and discussed.  It is shown that a large
class of convex subsets of topological vector spaces shares this
property and that is can be used in lieu of compactness in a variety
of cases. Specifically, we establish convex compactness for certain familiar 
classes of subsets of the set of positive random variables under the
topology induced by convergence in probability. 
  
Two applications in infinite-dimensional optimization - attainment of
infima and a version of the Minimax theorem - are given. Moreover, a
new fixed-point theorem of the
Knaster-Kuratowski-Mazurkiewicz-type is derived and used to prove a 
general version of the Walrasian excess-demand theorem. 
\end{quote}

\bigskip

{\footnotesize

\noindent{\em JEL Codes:} C61, C62

\medskip

\noindent{\em 2000 Mathematics Subject Classification:} Primary. 46N10;\
Secondary. 47H10,60G99

\medskip

\noindent{\em Keywords:}
{convex compactness},
{excess-de\-mand theorem},
{Fr\' echet spaces},
{KKM theorem},
{minimax theorem}, 
{optimization},

\medskip

\noindent{\em Acknowledgements:} This work was supported in part by
the National Science Foundation under award number DMS-0706947.  Any
opinions, findings and conclusions or recommendations expressed in
this material are those of the author and do not necessarily reflect
those of the National Science Foundation.  The author would like to
thank Nigel Kalton for enlightening comments on the history of the
subject.  Many thanks to Andreas Hamel, Constantinos Kardaras,
Mi\-kl\' os R\' a\-so\-nyi, Walter
Schachermayer, Ted Odell and Mark Owen and  for
fruitful discussions and good advice.  } 
\pagebreak

\section{Introduction}
The raison d'\^etre of the present paper is to provide the community
with a new formulation and a convenient toolbox for a number of
interrelated ideas centered around compactness substitutes in
infinite-dimensional spaces that occur frequently in mathematical
economics and finance. Our goal is to keep the presentation short and
to-the-point and derive a number of (still quite general)
mathematical theorems which we believe will find their use in the
analysis of various applied problems in finance and economics. For
this reason, we do not include worked-out examples in specific
economic models, but paint a broader picture and point to some common
areas of interest (such as infinite-dimensional optimization or
general-equilibrium theory) where we believe our results can be
successfully applied.

\bigskip

Compactness in infinite-dimensional topological vector
spaces comes at a cost: either the size of the set, or the strength of
the topology used have to be severely restricted.  Fortunately,
the full compactness requirement is often not necessary for applications,
especially in mathematical economics where a substantial portion of
the studied objects exhibits at least some degree of convexity. This
fact has been heavily exploited in the classical literature, and
refined recently in the work of \cite{DelSch94, DelSch99, KraSch99}
and others in the field of mathematical finance (e.g., in the context
of the fundamental theorem of asset pricing and utility maximization
theory in incomplete markets). More specifically, the above authors,
as well as many others, used various incarnations of the beautiful
theorem of \cite{Kom67} (see, also, \cite{Sch86}) to extract a
convergent sequence of convex combinations from an arbitrary (mildly
bounded) sequence of non-negative random variables (see, e.g.,
\cite{Nik71,BuhLoz73} for some of the earliest references).  In the
context of convex optimization problems this procedure is often as
versatile as the extraction of a true subsequence, but can be
applied in a much larger number of situations.

\medskip

The initial goal of this note is to abstract the precise property -
called {\em convex compactness} - of the space $\lzer_+$ (with
$\lzer_+$ denoting the set of all a.s.-equivalence classes of
non-negative random variables, equipped with the complete metric
topology of convergence in probability) which allows for the rich
theory mentioned above. Our first result states that, in the class of
convex subsets of $\lzer_+$, the convexly compact sets are precisely
the bounded and closed ones.  An independently-useful abstract
characterization in terms or generalized sequences is given. We stress
that the language of probability theory is used for convenience only,
and that all of our results extend readily to abstract measure spaces
with finite measures.

Recent research in the field of mathematical finance has singled out
$\lzer_+$ as especially important - indispensable, in fact. Unlike in
many other applications of optimal (stochastic) control and calculus
of variations, the pertinent objective functions in economics and
finance - the utility functions - are too ``badly-behaved'' to allow
for the well-developed and classical $\lpee$ theory (with $p\in
[1,\infty]$) to be applied. Moreover, the budget-type constraints
present in incomplete markets are typically posed with respect to a
class of probability measures (which are often the martingale measures
for a particular financial model) and not with respect to a single
measure, as often required by the $\lpee$ theory. The immediate
consequence of this fact is that the only natural functional-analytic
framework must be, in a sense, measure-free. In the hierarchy of the
$\lpee$ spaces, this leaves the two extremes: $\lzer$ and $\linf$. The
smaller of the two - $\linf$ - is clearly too small to contain even
the most important special cases (the solution to the Merton's
problem, for example), so we are forced to work with $\lzer$. The
price that needs to be paid is the renouncement of a large number of
classical functional-analytic tools which were developed with
locally-convex spaces (and, in particular, Banach spaces) in
mind. $\lzer$ fails the local-convexity property in a dramatic
fashion: if $(\Omega,\FF,\PP)$ is non-atomic, the topological dual of
$\lzer$ is trivial, i.e., equals $\set{0}$. Therefore, a new set of
functional-analytic tools which do not rely on local convexity are
needed to treat even the most basic problems posed by finance and
economics: pricing, optimal investment, and, ultimately, existence of
equilibria.

In addition to existence results for the usual optimization problems
in quantitative finance (see \cite{Mer69,Mer71}, \cite{Pli86},
\cite{CoxHua89}, \cite{KarLehShrXu90}, \cite{HePea91a},
\cite{KraSch99}, \cite{CviSchWan01}, \cite{KarZit03}), some recent
results dealing with stability properties of those problems testify
even stronger to the indispensability of $\lzer_+$ (and the related
convergence in probability) in this context. As \cite{LarZit06a} and
\cite{KarZit07} have shown, the output (i.e., the demand function) of
such optimization procedures is a continuous transformation of the
primitives (a parametrization of a set of models) when both are viewed
as subsets of $\lzer_+$. Moreover, there are simple examples showing
that such continuity fails if the space $\lzer_+$ is upgraded to an
$\lpee$-space, for $p\geq 1$. In addition to their intrinsic interest,
such stability is an important ingredient in the study of equilibria
in incomplete financial (stochastic) models. Indeed, the continuity in
probability of the excess-demand function enters as condition {\em
  1.}~in Theorem \ref{thm:excess-dem}. Let us also mention that the
most relevant classical approaches to the general-equilibrium theory
(see \cite{Bew72} or \cite{MasZam91}) depend on the utility functions
exhibiting a certain degree of continuity in the Mackey topology. This
condition is not satisfied in general in the Alt-von
Neumann-Morgenstern setting for utility functions satisfying the Inada
conditions (the HARA and CRRA families, e.g.). They do give rise to
$\lzer_+$-continuous demand functions under mild regularity
conditions.

\medskip

Another contribution of the present paper is the realization that,
apart from its applicability to $\lzer_+$, convex compactness is a
sufficient condition for several well-known and widely-applicable
optimization results to hold under very general conditions. We start
with a simple observation that lower semicontinuous, quasiconvex and
appropriately coercive functionals attain their infima on convexly
compact sets. In the same spirit, we establish a version of the
Minimax Theorem where compactness is replaced by convex compactness.

The central application of convex compactness is a generalization of
the classical ``fixed-point'' theorem of Knaster, Kuratowski and
Mazurkiewicz (see \cite{KnaKurMaz29}) to convexly compact sets.  In
addition to being interesting in its own right, it is used to give a
variant of the Walrasian excess-demand theorem of general-equilibrium
theory based on convex compactness.  This result can serve as a
theoretical basis for various equilibrium existence theorems where
regularity of the primitives implies only weak regularity (continuity
in probability) of the excess-demand functions. As already mentioned
above, that is, for instance, the case when the agents' preferences
are induced by Alt-von Neumann-Morgenstern expected utilities which
satisfy Inada conditions, but are not necessarily Mackey-continuous.

The paper is structured as follows: after this introduction, section
2.~defines the notion of convex compactness and provides necessary
tools for its characterization. Section 3.~deals with the main source
of examples of convex compactness - the space $\lzer_+$ of all
non-negative random variables. The applications are dealt with in section 4.

\section{Convex compactness}

\subsection{The notion of convex compactness}
Let $A$ be a non-empty set. The set $\Fin(A)$ consisting of all
non-empty finite subsets of $A$ carries a natural structure of a
partially ordered set when ordered by inclusion. Moreover, it is
 a directed set, since $D_1, D_2 \subseteq D_1\cup D_2$ for
any $D_1,D_2\in \Fin(A)$.
\begin{definition}
\label{def:convex-compact}
A convex subset $C$ of a topological vector space $X$  is said to be
{\em convexly compact} if for any non-empty set $A$ and any family
$\net{F}$ of { closed and convex} subsets of $C$, the condition
\begin{equation}%
  \label{equ:FI}
  \begin{split}
\forall\, D\in\Fin(A),\, \bigcap_{\alpha \in D} F_{\alpha} \not =
\emptyset  
    \end{split}
\end{equation} 
implies 
\begin{equation}%
\label{equ:I}
    \begin{split}
\bigcap_{\alpha\in A} F_{\alpha}\not = \emptyset.
    \end{split}
\end{equation}
\end{definition}

 Without the additional restriction that the sets $\net{F}$ be
    convex, Definition \ref{def:convex-compact} - 
    postulating the finite-intersection property for families of
    closed and convex sets - would be equivalent to
    the classical definition of compactness. It is, therefore,
    immediately clear that any convex and compact subset of a
    topological vector space is convexly compact. 
\begin{example}[{\bf Convex compactness without compactness}] 
 Let $L$ be a locally-convex topological
    vector space, and let $L^*$ be the topological dual of $L$,
    endowed with some compatible topology $\tau$, possibly different
    from the weak-* topology $\sigma(L^*,L)$.  For a neighborhood $N$
    of $0$ in $L$, define the set $C$ in the topological dual $L^*$ of
    $L$ by
    \[ C=\sets{x^*\in L}{ \scl{x}{x^*}\leq 1,\ \forall\, x\in N}.\] In
    other words, $C=N^{\circ}$ is the polar of $N$. By the
    Banach-Alaoglu Theorem, $C$ is compact with respect to the weak-*
    topology $\sigma(L^*,L)$, but it may not be compact with respect
    to $\tau$. On the other hand, let $\net{F}$ be a non-empty family
    of convex and $\tau$-closed subsets of $C$ with the
    finite-intersection property (\ref{equ:FI}).  It is a
    classical consequence of the Hahn-Banach Theorem that the
    collection of closed and convex sets is the same for all
    topologies consistent with a given dual pair. Therefore, the sets
    $\net{F}$ are $\sigma(L^*,L)$-closed, and the
    relation (\ref{equ:I}) holds by the aforementioned
    $\sigma(L^*,L)$-compactness of $C$.
\end{example}
\subsection{A characterization in terms of generalized sequences} 
The classical theorem of \cite{Kom67} states that any norm-bounded
sequence in $\lone$ admits a subsequence whose Ces\` aro sums converge
a.s. The following characterization draws a parallel between Koml\'os'
theorem and the notion of convex compactness. It should be kept in
mind that we forgo equal (Ces\' aro) weights guaranteed by Koml\' os'
theorem, and settle for generic convex combinations.  We remind the
reader that for a subset $C$ of a vector space $X$, $\conv C$ denotes
the smallest convex subset of $X$ containing $C$.
\begin{definition}
\label{def:net-conv}
  Let $\net{x}$ be a net in a vector space $X$. A net $\netb{y}$ is
  said to be a \define{subnet of convex combinations of} $\net{x}$ if
  there exists a mapping $D:B\to \Fin(A)$ such that 
  \begin{enumerate}
  \item $y_{\beta}\in \conv\sets{x_{\alpha}}{\alpha\in D(\beta)}$ for
    each $\beta\in B$, and
  \item for each $\alpha\in A$ there exists $\beta\in B$ such that
    $\alpha'\succeq \alpha$ for each $\alpha'\in\bigcup_{\beta'\succeq
      \beta} D(\beta')$.
  \end{enumerate}
\end{definition}
\begin{proposition} 
\label{pro:cc-char}
A closed and convex subset $C$ of a topological
  vector space 
$X$ is convexly compact if and only if for any net
  $\net{x}$ in $C$ there exists a subnet $\netb{y}$ of convex
  combinations of $\net{x}$ such that $y_{\beta}\to y$ for some $y\in
  C$.
\end{proposition}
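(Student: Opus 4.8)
The plan is to prove both implications by translating between the "filter" formulation of convex compactness in Definition \ref{def:convex-compact} and the "net" formulation of Definition \ref{def:net-conv}.

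For the forward direction, suppose $C$ is convexly compact and $\net{x}$ is a net in $C$. For each $\alpha \in A$, let $T_\alpha = \{x_{\alpha'} : \alpha' \succeq \alpha\}$ be the $\alpha$-tail of the net, and set $F_\alpha = \overline{\conv}\, T_\alpha \subseteq C$ (closure of the convex hull, which lies in $C$ since $C$ is closed and convex). Each $F_\alpha$ is closed and convex, and because $A$ is directed, $\{F_\alpha\}$ has the finite-intersection property: for $D \in \Fin(A)$, pick $\alpha^* \succeq \alpha$ for all $\alpha \in D$, and then $x_{\alpha^*} \in \bigcap_{\alpha \in D} F_\alpha$. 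Convex compactness then yields a point $y \in \bigcap_{\alpha \in A} F_\alpha$. It remains to construct a subnet of convex combinations converging to $y$. Here I would take $B$ to be the directed set of pairs $\beta = (\alpha, U)$ where $\alpha \in A$ and $U$ is a neighborhood of $y$, ordered by $(\alpha_1,U_1) \succeq (\alpha_2,U_2)$ iff $\alpha_1 \succeq \alpha_2$ and $U_1 \subseteq U_2$. Since $y \in F_\alpha = \overline{\conv}\, T_\alpha$, for each such $\beta = (\alpha,U)$ there is an element $y_\beta \in \conv\, T_\alpha$ with $y_\beta \in U$; writing $y_\beta$ as a convex combination of finitely many $x_{\alpha'}$ with $\alpha' \succeq \alpha$ defines $D(\beta) \in \Fin(A)$ with $D(\beta) \subseteq \{\alpha' : \alpha' \succeq \alpha\}$. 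Property (1) of Definition \ref{def:net-conv} holds by construction; property (2) holds because for fixed $\alpha$ choosing $\beta_0 = (\alpha, X)$ forces every $\alpha'$ appearing in any $D(\beta')$ with $\beta' \succeq \beta_0$ to satisfy $\alpha' \succeq \alpha$; and $y_\beta \to y$ because eventually $U$ shrinks inside any prescribed neighborhood of $y$.

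For the converse, assume every net in $C$ has a subnet of convex combinations converging in $C$, and let $\net{F}$ be a family of closed convex subsets of $C$ with the finite-intersection property (\ref{equ:FI}). Index a net by $A' = \Fin(A)$, directed by inclusion: for $D \in A'$ choose $x_D \in \bigcap_{\alpha \in D} F_\alpha$, which is nonempty by (\ref{equ:FI}). By hypothesis there is a subnet $\netb{y}$ of convex combinations of $\{x_D\}_{D \in A'}$ with $y_\beta \to y \in C$. I claim $y \in F_\alpha$ for every $\alpha \in A$. Fix $\alpha$, and consider the singleton $\{\alpha\} \in A' = \Fin(A)$. By property (2) of Definition \ref{def:net-conv} there is $\beta_0 \in B$ such that every $D' \in \bigcup_{\beta' \succeq \beta_0} D(\beta')$ satisfies $D' \supseteq \{\alpha\}$, i.e. $\alpha \in D'$. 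Hence for $\beta' \succeq \beta_0$, each generator $x_{D'}$ of $y_{\beta'}$ satisfies $x_{D'} \in F_\alpha$ (since $\alpha \in D'$), so $y_{\beta'} \in \conv\{x_{D'} : D' \in D(\beta')\} \subseteq F_\alpha$ by convexity of $F_\alpha$. Thus the subnet $\netb{y}$ is eventually in $F_\alpha$, and since $F_\alpha$ is closed and $y_\beta \to y$, we get $y \in F_\alpha$. As $\alpha$ was arbitrary, $y \in \bigcap_{\alpha \in A} F_\alpha$, establishing (\ref{equ:I}).

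The main obstacle is the bookkeeping in the forward direction: verifying that the construction of $\netb{y}$ genuinely satisfies the somewhat intricate condition (2) of Definition \ref{def:net-conv} — which is precisely the condition ensuring that the "subnet of convex combinations" is cofinal in the right sense — while simultaneously arranging convergence to $y$. Choosing the index set $B$ to carry both a tail-index component and a neighborhood-filter component is what makes both requirements hold at once; once the index set is set up correctly, the verifications are routine. The converse direction is comparatively straightforward, the only point requiring care being the correct use of property (2) to force $\alpha$ into all the index sets $D(\beta')$ eventually.
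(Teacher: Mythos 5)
Your proposal is correct and follows essentially the same route as the paper's own proof: the same tail-hull sets $F_{\alpha}=\overline{\conv}\sets{x_{\alpha'}}{\alpha'\succeq\alpha}$ and the same index set $B$ of (tail-index, neighborhood) pairs in the forward direction, and the same net indexed by $\Fin(A)$ with the same use of property (2) of Definition \ref{def:net-conv} in the converse. The only difference is that you spell out a couple of verifications (the finite-intersection property of $\net{F}$ and the check of condition (2)) that the paper leaves implicit.
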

\begin{proof}
  {\bf $\Rightarrow$} Suppose, first, that $C$ is convexly compact, and
  let $\net{x}$ be a net in $C$.  For $\alpha\in A$ define the closed
  and convex set $F_{\alpha}\subseteq C$ by
  \[ F_{\alpha}=\overline{\conv}\sets{x_{\alpha'}}{\alpha'\succeq
    \alpha},\] where, for any $G\subseteq X$, 
 $\overline{\conv}\, G$ denotes the closure of $\conv G$.
  By convex compactness of $C$, there exists $y\in
  \cap_{\alpha\in A} F_{\alpha}$.  Define $B=\sU\times A$, where $\sU$
  is the collection of all neighborhoods of $y$ in $X$. The binary
  relation $\preceq_B$ on $B$, defined by \[
  \text{$(U_1,\alpha_1)\preceq_B (U_2, \alpha_2)$ if and only if $U_2
    \subseteq U_1$ and $\alpha_1\preceq \alpha_2$},\] is  a
  partial order under which $B$ becomes a directed set.

  By the construction of the family $\net{F}$, for each
  $\beta=(U,\alpha)\in B$ there exists a finite set $D(\beta)$ such
  that $\alpha'\succeq \alpha$ for each $\alpha'\in D(\beta)$ and an
  element $y_{\beta}\in \conv\sets{x_{\alpha'}}{\alpha'\in D(\beta)}
  \cap U$. It is evident now that $\netb{y}$ is a subnet of convex
  combinations of $\net{x}$ which converges towards $y$.

{\bf $\Leftarrow$ } Let $\net{F}$ be a family of closed and convex
subsets of $C$ satisfying (\ref{equ:FI}). For $\delta\in \Fin(A)$, we
set
\[ G_{\delta}=\bigcap_{\alpha\in\delta} F_{\alpha}\not=\emptyset,\] so
that $\netd{G}$ becomes an non-increasing net in
$(2^C,\subseteq)$, in the sense that $G_{\delta_1} \subseteq
G_{\delta_2}$ when $\delta_1 \supseteq \delta_2$. For each $\delta\in
\Fin(A)$,  we pick $x_{\delta}\in G_{\delta}$. By the assumption, there
exists a subnet $\netb{y}$ of convex combinations of $\netd{x}$
converging to some $y\in C$. More precisely, in the present setting,
conditions
(1) and (2) from
Definition \ref{def:net-conv} yield the
existence of a directed set $B$ and a function $D: B\to \Fin(\Fin(A))$
such that:
\begin{itemize}
\item[(1)] $y_{\beta}\in \conv\sets{x_{\delta}}{\delta\in
    D(\beta)}\subseteq \conv \cup_{\delta\in D(\beta)} G_{\delta}$;  in
  particular, $y_{\beta}\in G_{\delta}$ for each $\delta$ such that
  $\delta \subseteq \cap_{\delta'\in D(\beta)} \delta'$. 
\item[(2)] For each $\delta\in \Fin(A)$ there exists $\beta\in B$ such that 
$\delta \subseteq \delta'$ for all $\delta'\in \cup_{\beta'\succeq
  \beta} D(\beta')$.  In particular, if we set $\delta=\set{\alpha}$ for
an arbitrary $\alpha\in A$, we can assert the existence of
$\beta_{\alpha}\in B$ such that $\alpha\in \delta'$ for any
$\delta'\in \cup_{\beta'\succeq \beta_{\alpha}} D(\beta')$. 
\end{itemize}
Combining (1) and (2) above, we get that for each $\alpha\in A$ there
exists $\beta_{\alpha} \in B$ such that $y_{\beta}\in
G_{\set{\alpha}}=F_{\alpha}$ for all
$\beta\succeq\beta_{\alpha}$. Since $y_{\beta}\to y$ and $F_{\alpha}$
is a closed set, we necessarily have $y\in F_{\alpha}$  and, thus,
$y\in \cap_{\alpha\in A} F_{\alpha}$. Therefore,  $\cap_{\alpha\in A}
F_{\alpha}\not=\emptyset$. 
\end{proof}

\section{A space of random variables}

\subsection{Convex compactness in $\lzer_+$}
Let $(\Omega,\FF,\PP$) be a complete probability space.   The positive
orthant $\sets{f\in\lzer}{f\geq 0,\text{ a.s.}}$ will be denoted by
$\lzer_+$.
A set
$A\subseteq\lzer$ is said to be \define{bounded in $\lzer$} (or
\define{bounded in probability}) if
\[ \lim_{M\to\infty} \sup_{f\in A} \PP[\abs{f}\geq M]=0.\]
Unless specified
otherwise, any mention of convergence on $\lzer$ will be under the
topology of convergence in probability, induced by the
translation-invariant metric
$ d(f,g)=\EE[ 1\wedge \abs{f-g}]$, 
making $\lzer$ a Fr\' echet space (a topological vector
space admitting a complete compatible metric). 
It is well-known, however,  
that $\lzer$ is generally {\em not} a locally-convex
topological vector space. In fact, when $\PP$ is non-atomic, 
$\lzer$ admits no non-trivial continuous
linear functionals (see \cite{KalPecRob84}, Theorem 2.2, p.~18).  

In addition to not being locally-convex, the space $\lzer$ has poor
compactness properties. Indeed, let $C$ be a closed
uniformly-integrable ($\sigma(\lone,\linf)$-compact) set in
$\lone\subseteq \lzer$.  By a generalization of the dominated
convergence theorem, the topology of convergence in probability and
the strong $\lone$-topology on $C$ coincide; a statement of this form
is sometimes know as Osgood's theorem (see, e.g., Theorem 16.14,
p.~127, in \cite{Bil95}).  Therefore, the question
of $\lzer$-compactness reduces to the question of strong
$\lone$-compactness for such sets. It is a well-known fact that $C$ is
$\lone$-compact (equivalently, $\lzer$-compact) if and only if it
satisfies a non-trivial condition of \cite{Gir91} called the {\em
  Bocce criterion}.  The following result shows that, however, a much
larger class of sets shares the convex-compactness property. In fact,
there is a direct analogy between the present situation and the
well-known characterization of compactness in Euclidean spaces.
\begin{theorem}
\label{thm:main}
A closed and  convex subset $C$ of \,$\lzer_+$ is
convexly compact if and only if it is bounded in probability. 
\end{theorem}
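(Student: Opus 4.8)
The plan is to prove both directions using the net-characterization in Proposition~\ref{pro:cc-char}. For the easy direction ($\Rightarrow$), I would show that an unbounded closed convex set $C\subseteq\lzer_+$ cannot be convexly compact by producing a net that admits no convergent subnet of convex combinations. If $C$ is not bounded in probability, there exist $\eps>0$ and a sequence $f_n\in C$ with $\PP[f_n\geq n]\geq\eps$. Since $C\subseteq\lzer_+$ is convex, any convex combination $g=\sum\lambda_k f_{n_k}$ still puts mass at least... — more carefully, I would use positivity: if $g$ is a convex combination of $f_n$'s with the largest index used being $N$, then $g\geq\lambda f_N\inds{f_N\geq N}$ where $\lambda$ is the weight on $f_N$; one has to be slightly careful since $\lambda$ can be small, so instead I would arrange the sequence so $\PP[f_n\geq a_n]\geq\eps$ with $a_n\to\infty$ fast, and note that along a subnet of convex combinations the ``tail index'' goes to infinity (by condition (2) of Definition~\ref{def:net-conv}), forcing the convex combinations to be unbounded in probability and hence non-convergent in $\lzer$. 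This needs the elementary fact that a convergent net in $\lzer$ is bounded in probability.

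For the main direction ($\Leftarrow$), suppose $C$ is closed, convex, and bounded in probability, and let $\net{x}$ be a net in $C$. I want a subnet of convex combinations converging in $\lzer$. The strategy is a diagonalization/exhaustion over the directed set $\Fin(A)$ combined with Koml\'os-type extraction. For each $\alpha\in A$ set $C_\alpha=\overline{\conv}\sets{x_{\alpha'}}{\alpha'\succeq\alpha}$; these are closed, convex, bounded in probability, and decreasing in $\alpha$ with the finite-intersection property (each finite intersection contains some $C_{\alpha}$ with $\alpha$ dominating the finite set). By Proposition~\ref{pro:cc-char} it suffices to show $\bigcap_{\alpha\in A}C_\alpha\neq\emptyset$, i.e.\ to verify Definition~\ref{def:convex-compact} directly for this particular family — so really I would just prove the finite-intersection property statement \eqref{equ:FI}$\Rightarrow$\eqref{equ:I} for arbitrary closed convex $\net{F}$ in a bounded-in-probability $C$, which is cleaner. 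So: let $\net{F}$ be closed, convex subsets of $C$ with \eqref{equ:FI}. Pick, for each finite $D$, a point $z_D\in\bigcap_{\alpha\in D}F_\alpha$; then $\netdd{z}$ is a net in $C$.

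The crux is to extract from $\netdd{z}$ a convex-combination subnet that converges in $\lzer$; its limit $y$ will lie in every $F_\alpha$ by closedness and convexity, exactly as in the ``$\Leftarrow$'' argument of Proposition~\ref{pro:cc-char}. To produce the limit I would invoke a Koml\'os-type / Delbaen--Schachermayer lemma: a sequence bounded in probability in $\lzer_+$ admits a sequence of forward convex combinations converging a.s.\ (possibly to a value in $[0,\infty]$, but boundedness in probability rules out mass escaping to $+\infty$, so the limit is in $\lzer_+$). To go from sequences to nets I would argue by contradiction: if no convex-combination subnet converges, I would build a countable sub-collection along which the relevant $\overline{\conv}$-tails fail to shrink to a point, then apply the sequential lemma to that countable piece to get a contradiction — this requires showing that for a \emph{decreasing} net of closed convex sets $C_\alpha$ in a bounded-in-probability set, $\bigcap C_\alpha=\emptyset$ would give a countable decreasing subfamily with empty intersection, and then that the sequential Koml\'os extraction forces a common point. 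Concretely: if $\bigcap_\alpha C_\alpha=\emptyset$ then, since each $C_\alpha$ is closed, for every $\omega$-a.s.\ candidate... — the clean route is: pick $z_\alpha\in C_\alpha$, extract (using boundedness in probability and the sequential lemma applied to a cofinal sequence $\alpha_1\preceq\alpha_2\preceq\cdots$) convex combinations $w_k\in\conv\{z_{\alpha_j}:j\geq k\}$ with $w_k\to y$ a.s.\ and in $\lzer$; since $w_k\in C_{\alpha_k}$ and each $C_\alpha$ is closed and eventually contains all $w_k$, we get $y\in\bigcap_\alpha C_\alpha$, a contradiction. But a general directed set need not have a countable cofinal subset, so the honest argument must reduce to the countable case by the standard trick of noting that failure of \eqref{equ:I} together with closedness lets one pick, inductively, $\alpha_1,\alpha_2,\dots$ witnessing that finitely many $F$'s already have ``small'' intersection in the metric $d$ — this reduction is the step I expect to be the main obstacle, and I would handle it by exploiting that $d(f,g)=\EE[1\wedge|f-g|]$ makes $\lzer$ metrizable, so nets can be replaced by sequences whenever one is testing membership in a fixed closed set. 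The remaining pieces — that convex combinations stay in $C$ (convexity), that the a.s.\ limit is in $\lzer_+$ and not merely $[0,\infty]$-valued (boundedness in probability via Fatou), and that the limit lies in each $F_\alpha$ (closedness) — are routine.
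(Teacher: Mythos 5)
Your ``$\Rightarrow$'' direction is essentially the paper's argument: unboundedness gives $f_n$ with $\PP[f_n\geq n]>\eps$, and one shows that every forward convex combination of the tail $\{f_m\}_{m\geq n}$ retains mass at level $\approx n\eps/2$ with probability $\approx\eps/2$, contradicting convergence of any convex-combination subnet to an element of $\lzer_+$. The one ingredient you circle around without supplying is exactly this quantitative statement (the paper cites Lemma 9.8.6 of \cite{DelSch06}); your proposed detour of re-spacing the sequence so that $\PP[f_n\geq a_n]\geq\eps$ with $a_n\to\infty$ fast does not by itself control a convex combination whose weight on the large-index terms is small --- you still need the pigeonhole estimate $\EE[\sum_k\lambda_k\ind{A_k}]\geq\eps$, $0\leq\sum_k\lambda_k\ind{A_k}\leq 1$, hence $\PP[\sum_k\lambda_k\ind{A_k}\geq\eps/2]\geq\eps/2$. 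That is a fixable omission.

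The genuine gap is in the ``$\Leftarrow$'' direction, precisely at the step you yourself flag as ``the main obstacle'': reducing an arbitrary directed family $\net{F}$ to a countable one so that a sequential Koml\'os-type lemma applies. Metrizability of $\lzer$ does not do this for you. Metrizability lets you test membership in a \emph{fixed} closed set with sequences; it does not imply that an empty intersection of uncountably many closed convex sets admits a countable subfamily with empty intersection. (That implication does hold in hereditarily Lindel\"of, e.g.\ separable, spaces, but $\lzer(\Omega,\FF,\PP)$ is separable only when $\FF$ is countably generated mod null sets, which is not assumed; think of a decreasing chain indexed by $\omega_1$, where every countable subfamily has a last element and hence nonempty intersection.) Even granting a countable cofinal piece $D_1\subseteq D_2\subseteq\cdots$, the sequential extraction only places the limit $y$ in $F_\alpha$ for $\alpha\in\bigcup_n D_n$, and you have no mechanism forcing the limits obtained from different countable subfamilies to coincide. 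The paper closes exactly this hole with a ``uniform convexity'' device that works for nets directly: with $\vp(x)=1-e^{-x}$, the quantities $u_D=\sup\sets{\EE[\vp(g)]}{g\in\conv\sets{f_{D'}}{D'\supseteq D}}$ form a monotone, hence convergent, net over all of $\Fin(A)$, and the strict midpoint gain of $\vp$ on bounded-in-probability sets shows that any family of near-optimizers $g_D$ is a Cauchy \emph{net} indexed by the whole of $\Fin(A)$; its limit then lies in every $F_\alpha$. Your plan needs either this variational argument or some other globally defined selection with a uniqueness property; as written, the reduction step would fail on non-separable $\lzer$ and is not justified even in the separable case.
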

\begin{proof}
{\bf $\Leftarrow$ }
  Let $C$ be a convex, closed and bounded-in-probability subset of
  $\lzer_+$,
  and let $\net{F}$ be a family of closed and convex subsets of
  $C$ satisfying \eqref{equ:FI}. 
  For $D\in \Fin(A)$ we define
  \[ G_D=C, \text{ when } D=\emptyset,\text{ and } 
G_D=\cap_{\alpha\in D} F_{\alpha}, \text{ otherwise,}
\] and fix an
  arbitrary $f_D\in G_D$.
  With $\vp(x)=1-\exp(-x)$, we set
  \[ u_D=\sup\sets{ \EE[\vp(g)] }{ g\in
    \conv\sets{f_{D'}}{D'\supseteq D} },\] so that $0\leq u_D\leq 1$
  and $u_{D_1} \geq u_{D_2}$, for $D_1\subseteq D_2$.
Seen as a net on the directed set $(\Fin(A),\subseteq)$, $\netdd{u}$ is
monotone and bounded, and therefore convergent, i.e.,
  $u_D\to u_{\infty}$, for some $u_{\infty}\in [0,1]$. 
 Moreover, for each
  $D\in \Fin(A)$ we can choose $g_D\in \conv\sets{g_{D'}}{D\subseteq D'}$ so
  that
  \[ u_D\geq \gamma_D\triangleq \EE[\vp(g_D)]\geq
  u_D-\frac{1}{\# D},\] where $\#D$ denotes the number of elements
  in $D$.  Clearly,  $\gamma_D\to u_{\infty}$.

  The reader is invited to check that simple analytic properties of
  the function $\vp$ are enough to prove the following statement: for
  each $M>0$ there exists $\eps=\eps(M)>0$, such that
  \begin{multline}
    \nonumber \text{if }\ x_1,x_2\geq 0,\  
\abs{x_1-x_2}\geq \tfrac{1}{M}\, \text{ and }\, 0\leq
    \min(x_1,x_2) \leq M, \\ \text{ then } \vp(\tot (x_1+x_2)) \geq
    \tot (\vp(x_1)+\vp(x_2))+\eps.
  \end{multline}
It follows that for any $D_1,D_2\in \Fin(A)$ we have
  \begin{multline}
    \nonumber \eps \PP\Big[ \abs{g_{D_1}-g_{D_2}} \geq \tfrac{1}{M},\
    \min(g_{D_1},g_{D_2})\leq M\Big] \leq \\  \leq \EE\left[ \vp\Big(\tot
    (g_{D_1}+g_{D_2}) \Big)\right]- \tot\left( \EE\big[\vp(g_{D_1})\big]+
\EE\big[\vp(g_{D_2})\big]\right).
  \end{multline}
  The random variable $\tot(g_{D_1}+g_{D_2})$ belongs to
  $\conv\sets{f_{D'}}{D'\supseteq D_1\cap D_2}$, so  
\[ \EE[ \vp(\tot (g_{D_1}+g_{D_2}) )]\leq u_{D_1\cap D_2}.\] 
  Consequently,
  \begin{equation}
    \nonumber 
    \begin{split}
      0 \leq \eps \PP[ \abs{g_{D_1}-g_{D_2}} \geq 1/M,\
      \min(g_{D_1},g_{D_2})\leq M] \leq \eta_{D_1,D_2},
    \end{split}
  \end{equation}
  where $\eta_{D_1,D_2} = u_{D_1\cap D_2} -
  \tot(u_{D_1}+u_{D_2})+\frac12
  (\frac{1}{\#D_1}+\frac{1}{\#D_2})$.
  Thanks to the boundedness in probability of the set $C$, for $\kappa>0$,
  we can find $M=M(\kappa)>0$ such that $M>1/\kappa$ and $\PP[f\geq
  M]<\kappa/2$ for any $f\in C$.
  Furthermore, let $D(\kappa)\in \Fin(A)$ be such that
  $u_{\infty}+\eps(M) \kappa/4\geq u_D \geq u_{\infty}$
  for all $D\supseteq D(\kappa)$, and $\# D(\kappa)>4/(\eps(M)
  \kappa)$.
  Then, for $D_1, D_2 \supseteq D(\kappa)$ we have
  \begin{equation}
    \nonumber 
    \begin{split}
      \PP[ \abs{g_{D_1}-g_{D_2}} \geq \kappa] & \leq \PP[
      \abs{g_{D_1}-g_{D_2}} \geq \tfrac{1}{M}, \min(g_{D_1},g_{D_2})
      \leq M ]+
      \PP[ \min(g_{D_1},g_{D_2})\geq M] \\
      & \leq \tfrac{1}{\eps(M)} \Big( u_{D_1\cap D_2} -
      \tot\big(u_{D_1}+u_{D_2}\big)+\tot (\tfrac{1}{\#
        D_1}+\tfrac{1}{\#D_2})\Big) \leq \kappa.
    \end{split}
  \end{equation}
  In other words, $\{ g_{D} \}_{D\in \Fin(A)}$ is a Cauchy net in $\lzer_+$
  which, by completeness, admits a limit $g_{\infty}\in \lzer_+$.  By
  construction and convexity of the sets $F_{\alpha}$, $\alpha\in A$,
  we have $g_{D}\in F_{\alpha}$ whenever $D\supseteq \set{\alpha}$. By
  closedness of $F_{\alpha}$, we conclude that $g_{\infty} \in
  F_{\alpha}$, and so, $g_{\infty} \in \cap_{\alpha\in A} F_{\alpha}$.

\medskip

{\bf $\Rightarrow$ }
It remains to show that convexly compact sets in $\lzer_+$ are
necessarily bounded in probability. Suppose, to the contrary, that
$C\subseteq \lzer_+$
is convexly compact, but not bounded in probability. Then, 
 there
exists a constant $\eps\in (0,1)$ and a sequence $\seq{f}$ in $C$ such that 
\begin{equation}%
\label{equ:prob-bound}
    \begin{split}
\PP[f_n\geq n]>\eps,\text{ for all } n\in\N. 
    \end{split}
\end{equation}
By Proposition
\ref{pro:cc-char}, there exists a subnet $\netb{g}$ of convex
combinations of $\seq{f}$ which converges to some $g\in C$. In
particular, for each $n\in\N$ there exists $\beta_n\in B$ such that 
$g_{\beta}$ can be written as a finite convex combination of the
elements of the set $\sets{f_{m}}{m\geq n}$, for any $\beta\succeq
\beta_n$.  Using (\ref{equ:prob-bound}) and  
Lemma 9.8.6., p.~205 in \cite{DelSch06}, we get the following estimate
\begin{equation}%
\label{equ:estimate-g}
    \begin{split}
\PP[ g_{\beta}\geq \tfrac{n\eps}{2}] \geq \tfrac{\eps}{2}, \text{ for
  all }\beta\succeq \beta_n. 
    \end{split}
\end{equation}
Therefore, 
\begin{equation}
    \nonumber 
    \begin{split}
\PP[ g\geq \tfrac{n\eps}{4}] & \geq 
\PP[g_{\beta}\geq
\tfrac{n\eps}{2}]-
\PP[\abs{g-g_{\beta}}>\tfrac{n\eps}{4}]
\geq \tfrac{\eps}{2}-\PP[\abs{g-g_{\beta}}>\tfrac{\eps}{4}]>\tfrac{\eps}{4},
    \end{split}
\end{equation}
for all ``large enough'' $\beta\in B$. Hence, $\PP[g=+\infty]>0$ - a
contradiction with the assumption $g\in C$. 
\end{proof}
\section{Applications}
Substitution of the strong notion of compactness for a weaker notion
of convex compactness opens a possibility for extensions of several
classical theorems to a more general setting. In the sequel, let $X$
denote a generic topological vector space.
\subsection{Attainment of infima for convexly coercive
  functions}
We set off  with a simple claim that convex and appropriately regular
functionals attain their infima  on convex-compact sets. This fact (in a
slightly different form) has been observed and used, e.g.,  in \cite{KraSch99}.

  For a function $G:X \to (-\infty,\infty]$ and $\ld\in
  (-\infty,\infty]$,  
we define the \define{$\ld$-lower-contour set}
  $L_G(\ld)$ as $L_G(\ld)=\sets{x\in X}{G(x)\leq\ld}$. The
  \define{effective domain} $\Dom(G)$ of $G$ is defined as
$\Dom(G)=\cup_{\ld<\infty} L_G(\ld)$. 

\begin{definition} 
\label{def:conv-coer}
  A function $G:X\to (-\infty,\infty]$ is said to be 
\define{convexly coercive} if $L_G(\ld)$ is
  convex and closed for all $\ld\in (-\infty,\infty]$, and there exists
  $\ld_0\in (-\infty,\infty]$ 
such that $L_G(\ld_0)$ is non-empty and convexly compact.
\end{definition}
\begin{remark}
  By Theorem \ref{thm:main}, in the special case when $X\subseteq \lzer$, and
  $\Dom(G)$ $\subseteq \lzer_+$, 
  convex coercivity is implied by the conjunction of the 
following three conditions:
  \begin{enumerate}
  \item \define {weak coercivity:} there exists 
$\ld_0\in (-\infty,\infty]$ such that
    the $L_G(\ld_0)$ is bounded-in-probability,
  \item \define{lower semi-continuity:} $L_G(\ld)$ is closed for each
     $\ld\in (-\infty,\infty)$, and
  \item \define{quasi-convexity:} $L_G(\ld)$ is convex for each
$\ld_0\in (-\infty,\infty)$ 
(a condition automatically satisfied when $G$ is convex). 
  \end{enumerate}
\end{remark}

\begin{lemma} Each bounded-from-below 
convexly coercive function $G:X \to (-\infty,\infty]$ attains its 
 infimum on $X$.
\end{lemma}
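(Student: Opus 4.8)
The plan is to obtain the minimizer as a point lying in the intersection of all sublevel sets $L_G(\ld)$ with $\ld$ slightly above the infimal value, produced by a single invocation of convex compactness; this is the exact analogue of the classical fact that a lower semicontinuous coercive function attains its infimum on a compact set. Set $m:=\inf_{x\in X}G(x)\in(-\infty,\infty]$. If $m=+\infty$ then $G\equiv+\infty$ and every point of $X$ is a minimizer, so I may assume $m\in\R$. Let $\ld_0$ be as in Definition~\ref{def:conv-coer}, so that $C:=L_G(\ld_0)$ is non-empty, convex, closed and convexly compact. If $m\geq\ld_0$ (which forces $\ld_0<\infty$), then any $x\in C$ satisfies $m\leq G(x)\leq\ld_0\leq m$, hence $G(x)=m$ and we are done; so I may also assume $m<\ld_0$.

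Now consider the index set $A:=\sets{\ld\in\R}{m<\ld\leq\ld_0}$ (with the constraint $\ld\leq\ld_0$ read as vacuous when $\ld_0=\infty$), which is non-empty because $m<\ld_0$, and the family $\sets{L_G(\ld)}{\ld\in A}$. By convex coercivity each $L_G(\ld)$ is closed and convex, and $L_G(\ld)\subseteq L_G(\ld_0)=C$ since $\ld\leq\ld_0$; thus these are closed convex subsets of the convexly compact set $C$. Since $\ld\mapsto L_G(\ld)$ is non-decreasing for inclusion, the family is totally ordered, so for any finite $D\subseteq A$ one has $\bigcap_{\ld\in D}L_G(\ld)=L_G(\min D)$, which is non-empty: as $\min D>m=\inf_X G$, there is $x$ with $G(x)<\min D$. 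Hence condition \eqref{equ:FI} holds, and convex compactness of $C$ (Definition~\ref{def:convex-compact}) yields, via \eqref{equ:I}, a point $x^*\in\bigcap_{\ld\in A}L_G(\ld)$. For this $x^*$ we have $G(x^*)\leq\ld$ for every $\ld\in A$; letting $\ld\downarrow m$ gives $G(x^*)\leq m$, while $G(x^*)\geq m$ by the definition of $m$. Therefore $G(x^*)=m=\inf_X G$, and the infimum is attained.

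I do not expect a genuine obstacle: the whole content has been packaged into Definition~\ref{def:conv-coer}, which is tailored precisely so that the sublevel sets $\set{L_G(\ld)}$ supply the closed convex subfamily demanded by convex compactness, and nestedness of sublevel sets trivializes the finite-intersection hypothesis. The only point needing a little care is the bookkeeping around the value $+\infty$ — ensuring that the index set $A$ is non-empty and that each $L_G(\ld)$ sits inside $L_G(\ld_0)$ regardless of whether $\ld_0$ is finite — and this is handled by the two preliminary reductions above.
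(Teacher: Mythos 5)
Your proof is correct and follows essentially the same route as the paper: apply convex compactness to the nested family of sublevel sets with levels just above the infimum, whose finite intersections are trivially non-empty, and extract a point in the full intersection. Your version is in fact a bit more careful with the edge cases ($m=+\infty$, $\ld_0=\infty$, $m\geq\ld_0$) and avoids a small typo in the paper's definition of the sets $F_a$ (which should read $G(x)\leq a$, not $G(x)\leq a_0$).
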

\begin{proof}
Let $r_0=\inf\sets{G(x)}{x\in X}$.   
  Let $a_0=G(x_0)$, for some  $x_0\in L_G(\ld_0)$, with $\ld_0$
 as in Definition \ref{def:conv-coer} above.
 If $a_0=r_0$, we are done.  Suppose, therefore, that
  $a_0>r_0$. For $a\in (r_0,a_0]$ we define
  \[ F_a=\sets{ x\in X}{G(x)\leq a_0}\not=\emptyset.\]
The family $\sets{F_a}{a\in (r_0,a_0]}$ of convex and closed subsets
of the convex-compact set 
$L_G(\ld_0)$  is  nested; in particular, it
has the finite intersection property (\ref{equ:FI}). Therefore, 
there exists $x^*_0\in \cap_{a\in
  (r_0,a_0]} F_{a}$. It follows that
 $G(x^*_0)=r_0$, and so the minimum of $G$ is attained at $x^*_0$. 
\end{proof}

\begin{remark}
  Let $\Phi:[0,\infty)\to [0,\infty)$ be a convex and lower
  semi-continuous function. Define the mapping $G:\lzer_+\to
  [0,\infty]$ by
  \[ G(f)= \EE[ \Phi(f) ], \ f\in\lzer_+.\] $G$ is clearly convex and
  Fatou's lemma implies that it is lower semi-continuous.  In
  order to guarantee its convex coercivity, one can either restrict the
  function $G$ onto a convex, closed and bounded-in-probability subset
  $B$ of $\lzer$ (by setting $G(f)=+\infty$ for $f\in B^c$), or impose
  growth conditions on the function $\Phi$. A simple example of
  such a condition is the following (which is, in fact, equivalent to
  $\lim_{x\to\infty} \Phi(x)=+\infty$):
\begin{equation}%
  \label{equ:cond-one}
    \begin{split}
      \liminf_{x\to\infty} \frac{\Phi(x)}{x}>0.
    \end{split}
  \end{equation}
  Indeed, if \eqref{equ:cond-one} holds, then there exist constants
  $D\in \R$ and $\delta>0$ such that $\Phi(x)\geq D+\delta
  x$. Therefore, for $f_0\equiv 1$, with $c= G(f_0)=\Phi(1)$ we have
  \[ \sets{ f\in\lzer_+}{G(f)\leq c} \subseteq \sets{ f\in\lzer_+}{
    \EE[f]\leq (c-D)/\delta}.\] This set on the right-hand side above
  is bounded in $\lone$
  and, therefore, in probability.
\end{remark}

\subsection{A version of the theorem of Knaster, Kuratowski and
  Mazurkiewicz}
The celebrated theorem of Knaster, Kuratowski and Mazurkiewicz 
originally stated for  finite-dimensional
simplices, is commonly considered as a mathematical basis for the
general equilibrium theory of mathematical economics. 
\begin{theorem}[Knaster, Kuratowski and Mazurkiewicz (1929)]
  \label{thm:KKM-original} 
  \noindent Let $S$ be the unit simplex in $\R^{d}$, $d\geq 1$, with
  vertices ${x_1,x_2,\dots, x_d}$.   Let $\{F_i\,:\, i=\ft{1}{d}\}$, be a
  collection of closed subsets of $S$ such that 
\[ \conv\set{x_{i_1},\dots, x_{i_k}} \subseteq
    \cup_{j=1}^k F_{i_j}\text{ for any subset }\set{i_1,\dots,
      i_k}\subseteq \set{\ft{1}{d}}.\]
Then, \[\bigcap_{i=1}^d F_{i}\not=\emptyset.\] 
\end{theorem}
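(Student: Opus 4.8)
The plan is to deduce the statement from \emph{Sperner's Lemma}, the classical combinatorial counterpart of the KKM theorem. Fix a point $x\in S$ and write it in barycentric coordinates, $x=\sum_{i=1}^d t_i(x)\,x_i$ with $t_i(x)\geq 0$ and $\sum_i t_i(x)=1$; call $I(x)=\sets{i}{t_i(x)>0}$ the \emph{carrier} of $x$. Since $x\in\conv\sets{x_i}{i\in I(x)}$, the hypothesis of the theorem, applied to the face spanned by the carrier, forces $x\in F_i$ for at least one $i\in I(x)$. Fix, once and for all, such an index and call it $\ell(x)$; thus $\ell:S\to\set{\ft 1d}$ satisfies $\ell(x)\in I(x)$ and $x\in F_{\ell(x)}$ for every $x\in S$. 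The first of these properties is precisely the statement that $\ell$ is a \emph{Sperner labeling}: a point lying on a proper face of $S$ receives the index of one of the vertices spanning that face.

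Next I would introduce a sequence of triangulations $\TT_m$ of $S$ whose mesh (the maximal diameter of a cell) tends to $0$ --- for instance the iterated barycentric subdivisions. Restricting $\ell$ to the vertex set of $\TT_m$ and invoking Sperner's Lemma produces, for each $m$, an $(d-1)$-dimensional cell $\sigma_m\in\TT_m$ all of whose vertices carry distinct labels; after relabeling we may write its vertices as $v^m_1,\dots,v^m_d$ with $\ell(v^m_j)=j$, so that $v^m_j\in F_j$ for each $j$. Since $S$ is compact and $\operatorname{diam}(\sigma_m)\to 0$, we may pass to a subsequence along which $v^m_1\to x^\ast$ for some $x^\ast\in S$, and then $v^m_j\to x^\ast$ for every $j$ as well, the diameters shrinking to zero. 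As $v^m_j\in F_j$ and each $F_j$ is closed, we conclude $x^\ast\in F_j$ for all $j$, i.e. $x^\ast\in\bigcap_{i=1}^d F_i$, which is therefore non-empty.

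The substantive ingredient is Sperner's Lemma itself: for any triangulation of $S$ equipped with a Sperner labeling, the number of fully-labeled cells is odd, hence positive. I would prove this by induction on the dimension $d$ via a parity ("rooms-and-doors") argument --- counting, modulo $2$, the pairs consisting of a cell of the triangulation together with one of its facets bearing the label set $\set{\ft 1{d-1}}$, observing that interior facets are counted twice while boundary facets of this type must lie on the single face of $S$ spanned by $x_1,\dots,x_{d-1}$, where they are governed by the $(d-1)$-dimensional case (the case $d=1$ being the elementary observation that a path of labels starting at $1$ and ending at $2$ has an odd number of label changes). I expect this combinatorial bookkeeping, rather than the limiting argument, to be the only delicate point: once the labeling $\ell$ is set up, the rest is a routine compactness argument, and the legitimacy of $\ell$ rests solely on the fact that the covering hypothesis is imposed on \emph{every} face of $S$, in particular on the carrier face of an arbitrary vertex of the subdivision.
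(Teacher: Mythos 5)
Your argument is correct. Note, however, that the paper offers no proof of this statement at all: it is imported as a classical result with a citation to the original 1929 paper of Knaster, Kuratowski and Mazurkiewicz, so there is nothing to compare against line by line. What you have written is the standard (indeed, essentially the original) derivation from Sperner's Lemma, and all the essential points are in place: the labeling $\ell(x)\in I(x)$ with $x\in F_{\ell(x)}$ exists precisely because the covering hypothesis is imposed on the carrier face $\conv\sets{x_i}{i\in I(x)}$; the inclusion $\ell(v)\in I(v)\subseteq J$ for a vertex $v$ lying on the face spanned by $\sets{x_i}{i\in J}$ is exactly the Sperner condition; and the passage to the limit along fully-labeled cells of vanishing mesh, using closedness of the $F_j$ and compactness of $S$, is routine. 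Two cosmetic remarks. First, with the paper's convention that $S$ has $d$ vertices and hence dimension $d-1$, the fully-labeled cells produced by Sperner's Lemma are the top-dimensional $(d-1)$-simplices of the triangulation, each with $d$ vertices, which is consistent with what you wrote. Second, in your sketch of the induction for Sperner's Lemma the ``path with an odd number of label changes'' is the case of an interval, i.e.\ $d=2$ in this vertex-counting convention (the case $d=1$ is a single point and is vacuous); this is an off-by-one in the bookkeeping of the auxiliary lemma, not a gap in the argument. If you were writing this out in full, the rooms-and-doors parity count is the only place requiring real care, exactly as you anticipate.
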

Before stating a
 useful, yet simple, corollary to Theorem \ref{thm:KKM-original}, we
 introduce 
the KKM-property:
\begin{definition}
  \label{def:KKM-property}
  Let $X$ be a vector space and let $B$ be its non-empty subset.  A
  family $\set{F(x)}_{x\in B}$ of subsets of $X$
  is said to have the
  {\bf Knaster-Kuratowski-Mazurkiewicz (KKM) property} if
  \begin{enumerate}
  \item \label{ite:KKM-one} $F(x)$ is closed for each $x\in
    B$, and
  \item \label{ite:KKM-two} $\conv\set{x_1,\dots, x_n} \subseteq
    \cup_{i=1}^n F(x_i)$, for any finite set $\set{x_1,\dots,
      x_n}\in B$.
  \end{enumerate}
\end{definition}
\begin{corollary}
\label{cor:KKM}
 Let $X$ be a vector space, and let $B$ be a non-empty subset of $X$.
 Suppose that a 
  family $\set{F(x)}_{x\in B}$ of subsets of $X$ has
  the KKM property. Then,
\[ \bigcap_{i=1}^n F(x_i)\not=\emptyset,\]
for any finite set $\set{x_1,\dots, x_n}\subseteq B$.
\end{corollary}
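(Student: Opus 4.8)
The plan is to deduce the corollary from the classical finite-dimensional statement, Theorem~\ref{thm:KKM-original}, by transporting the family $\set{F(x)}_{x\in B}$ to a standard simplex. Fix a finite set $\set{x_1,\dots,x_n}\subseteq B$ and let $\Delta=\sets{\ld\in\R^n}{\ld_i\geq 0,\ \sum_{i=1}^n\ld_i=1}$ be the unit simplex in $\R^n$ with vertices $e_1,\dots,e_n$. I would introduce the affine map $\pi\colon\Delta\to X$, $\pi(\ld)=\sum_{i=1}^n\ld_i\,x_i$. As a finite convex combination, $\pi$ is continuous (the vector-space operations being continuous; if one works only with the bare vector space of Definition~\ref{def:KKM-property}, the same holds for its finite topology), and by construction $\pi\big(\conv\set{e_{i_1},\dots,e_{i_k}}\big)=\conv\set{x_{i_1},\dots,x_{i_k}}$ for every $\set{i_1,\dots,i_k}\subseteq\set{1,\dots,n}$.

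Next, I would set $\tilde F_i=\pi^{-1}(F(x_i))\subseteq\Delta$ for $i=1,\dots,n$ and verify that $\set{\tilde F_1,\dots,\tilde F_n}$ satisfies the hypotheses of Theorem~\ref{thm:KKM-original} with $S=\Delta$ and vertices $e_1,\dots,e_n$. Each $\tilde F_i$ is closed in $\Delta$ since $\pi$ is continuous and $F(x_i)$ is closed (Definition~\ref{def:KKM-property}\,(\ref{ite:KKM-one})). For the covering property, fix $\set{i_1,\dots,i_k}\subseteq\set{1,\dots,n}$ and $p\in\conv\set{e_{i_1},\dots,e_{i_k}}$; then $\pi(p)\in\conv\set{x_{i_1},\dots,x_{i_k}}\subseteq\bigcup_{j=1}^k F(x_{i_j})$ by Definition~\ref{def:KKM-property}\,(\ref{ite:KKM-two}), so $p\in\tilde F_{i_j}$ for some $j$, which gives $\conv\set{e_{i_1},\dots,e_{i_k}}\subseteq\bigcup_{j=1}^k\tilde F_{i_j}$. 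Theorem~\ref{thm:KKM-original} then produces $p^*\in\bigcap_{i=1}^n\tilde F_i$, and $\pi(p^*)\in\bigcap_{i=1}^n F(x_i)$, so the latter set is non-empty.

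I do not anticipate a real obstacle; the proof is a routine pull-back. The only points demanding care are the continuity of $\pi$ --- the single place where the topology on $X$ (and hence the word ``closed'') is used --- and reading the covering inclusion in the correct direction when moving between faces of $\Delta$ and faces of $\conv\set{x_1,\dots,x_n}$. It is worth stressing that closedness of the $F(x)$ is genuinely needed, as it is invoked inside Theorem~\ref{thm:KKM-original}: already for $n=2$ the cover of $[0,1]$ by the non-closed sets $[0,\tot)$ and $[\tot,1]$ satisfies the combinatorial condition yet has empty intersection.
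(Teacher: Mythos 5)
Your proof is correct, and it is the evidently intended argument: the paper states Corollary~\ref{cor:KKM} without proof as a ``simple'' consequence of Theorem~\ref{thm:KKM-original}, and the standard route is exactly your pull-back of the family along the affine map $\pi(\ld)=\sum_{i=1}^n\ld_i x_i$ from the standard simplex. Your handling of the one genuinely delicate point --- what ``closed'' means when $X$ is a bare vector space, and why $\pi^{-1}(F(x_i))$ is closed in $\Delta$ in either reading --- is also the right way to reconcile the corollary's hypotheses with its later use inside Theorem~\ref{thm:KKM}, where $X$ is a topological vector space.
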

The literature on fixed points abounds with extensions of Theorem
\ref{thm:KKM-original} to various locally-convex settings
(see, e.g.,  Chapter I, \S 4 in \cite{GraDug03}). To the best
of our knowledge, this extension has not been made to the class of
Fr\' echet spaces (and $\lzer$ in particular). The following
result is a direct consequence of the combination of Theorem
\ref{thm:KKM-original} and Theorem \ref{thm:main}. The reader can
observe that the usual assumption of compactness has been replaced by
convexity and convex compactness.
\begin{theorem}
  \label{thm:KKM}
 Let $X$ be a topological 
vector space, and let $B$ be a non-empty subset of $X$.
 Suppose that a 
  family $\set{F(x)}_{x\in B}$ of {convex} 
subsets of $X$, indexed by $B$, has
  the KKM property. If there exists $x_0\in B$ such that $F(x_0)$ is
  convexly compact, then 
  \[ \bigcap_{x\in B} F(x) \not= \emptyset.\] 
\end{theorem}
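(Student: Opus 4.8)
The plan is to reduce Theorem \ref{thm:KKM} to the combination of Corollary \ref{cor:KKM} (which packages the finite-dimensional KKM theorem) and the convex-compactness of $F(x_0)$. The key observation is that the family $\set{F(x)}_{x\in B}$ is, after intersecting with $F(x_0)$, exactly a family of closed and convex subsets of a convexly compact set, and convex compactness is precisely the statement that the finite-intersection property implies non-empty total intersection. So the whole game is to verify the finite-intersection property for this restricted family.

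\medskip

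\noindent\textbf{Step 1 (setting up the restricted family).} For each $x\in B$, put $G(x)=F(x)\cap F(x_0)$. Since each $F(x)$ is convex (by hypothesis) and closed (by the KKM property, item \ref{ite:KKM-one} of Definition \ref{def:KKM-property}), and $F(x_0)$ is convex and convexly compact (hence, in particular, closed, as convex compactness presupposes in Definition \ref{def:convex-compact} that we are intersecting closed convex subsets — more simply, a convexly compact set is closed because the singleton-family argument applies), each $G(x)$ is a closed and convex subset of the convexly compact set $C\triangleq F(x_0)$. Note $G(x_0)=F(x_0)=C$, so the family is non-empty.

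\medskip

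\noindent\textbf{Step 2 (finite-intersection property).} Fix any finite set $\set{x_1,\dots,x_n}\subseteq B$; we must show $\bigcap_{i=1}^n G(x_i)\neq\emptyset$, i.e., $F(x_0)\cap\bigcap_{i=1}^n F(x_i)\neq\emptyset$. But this is just the assertion that $\bigcap_{i=0}^n F(x_i)\neq\emptyset$ for the finite family indexed by $\set{x_0,x_1,\dots,x_n}\subseteq B$, and that is exactly the conclusion of Corollary \ref{cor:KKM}, whose hypotheses — closedness of each $F(x)$ and the covering condition $\conv\set{y_1,\dots,y_k}\subseteq\bigcup_{j=1}^k F(y_j)$ for finite subsets of $B$ — are inherited verbatim from the assumed KKM property of $\set{F(x)}_{x\in B}$. (Corollary \ref{cor:KKM} is itself deduced from Theorem \ref{thm:KKM-original} by pushing forward a generic finite configuration via the affine map sending the standard simplex's vertices to $x_1,\dots,x_n$ and pulling the sets $F(x_i)$ back along it; one checks the $\R^n$ covering hypothesis holds and applies Theorem \ref{thm:KKM-original} there.)

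\medskip

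\noindent\textbf{Step 3 (conclusion via convex compactness).} By Steps 1 and 2, $\set{G(x)}_{x\in B}$ is a family of closed and convex subsets of the convexly compact set $C=F(x_0)$ satisfying the finite-intersection condition \eqref{equ:FI}. Applying Definition \ref{def:convex-compact} directly yields $\bigcap_{x\in B} G(x)\neq\emptyset$. Since $G(x)\subseteq F(x)$ for every $x$, we conclude $\bigcap_{x\in B} F(x)\supseteq\bigcap_{x\in B} G(x)\neq\emptyset$, which is the desired statement.

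\medskip

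\noindent\textbf{Where the difficulty lies.} There is essentially no analytic obstacle here — the proof is a bookkeeping exercise once the right objects are in place. The only point requiring a little care is the passage in Step 2 through Corollary \ref{cor:KKM}: one must make sure that the covering hypothesis in Definition \ref{def:KKM-property}(\ref{ite:KKM-two}), which is phrased for \emph{all} finite subsets of $B$, is used with the enlarged index set $\set{x_0,x_1,\dots,x_n}$, and that the affine-image argument proving Corollary \ref{cor:KKM} does not secretly require injectivity of $i\mapsto x_i$ or affine independence (it does not: degenerate configurations only make the covering condition easier to satisfy on faces). Beyond that, the theorem is immediate. The real content of the result is not in this proof but in Theorem \ref{thm:main}, which supplies the large supply of convexly compact sets (bounded closed convex subsets of $\lzer_+$) to which Theorem \ref{thm:KKM} can be applied.
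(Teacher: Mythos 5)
Your proposal is correct and follows essentially the same route as the paper: intersect each $F(x)$ with $F(x_0)$, obtain the finite-intersection property from Corollary \ref{cor:KKM}, and conclude by the convex compactness of $F(x_0)$. The only difference is that you spell out a few details (the closedness bookkeeping and the enlargement of the finite index set by $x_0$) that the paper leaves implicit.
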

\begin{proof}
Define $\tilde{F}(x)=F(x)\cap F(x_0)$, for $x\in B$. By Corollary
\ref{cor:KKM}, any finite subfamily of 
$\{\tilde{F}(x)\}_{x\in B}$ has a non-empty intersection. Since, all $\tilde{F}(x)$, $x\in B$ are convex and closed 
subsets of the convexly compact set
$F(x_0)$, we have 
\[\cap_{x\in
  B} F(x)=\cap_{x\in B} \tilde{F}(x)\not=\emptyset.\] 
\end{proof}
\subsection{A minimax theorem for $\lzer_+$} 
Since convexity already appears naturally in the classical Minimax
theorem, one can replace the usual compactness by convex compactness
at little cost. 
\begin{theorem}
  Let $C,D$ be two convexly compact (closed, convex and
  bounded-in-probability) subsets of $\lzer_+$, and let $\Phi:C\times
  D\to \R$ be a function with the following properties:
  \begin{enumerate}
  \item $f\mapsto \Phi(f,g)$ is concave and upper semi-continuous for all
    $g\in D$, 
  \item $g\mapsto \Phi(f,g)$ is convex  and lower  semi-continuous for all
    $f\in C$.
  \end{enumerate}
Then,  there exists a pair $(f_0,g_0)\in C\times D$ such that $(f_0,g_0)$ is a
saddle-point of $\Phi$, i.e.,  
\[ \Phi(f,g_0)\leq \Phi(f_0,g_0) \leq \Phi(f_0,g),\text{ for all } (f,g)\in
C\times D.\]
Moreover, 
\begin{equation}%
\label{equ:supinf}
    \begin{split}
 \sup_{f\in C} \inf_{g\in D} \Phi(f,g)=\inf_{g\in D} \sup_{f\in C}
 \Phi(f,g).
    \end{split}
\end{equation} 
\end{theorem}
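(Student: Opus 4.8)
The plan is to deduce this minimax theorem from the KKM-type result, Theorem~\ref{thm:KKM}, following the classical argument of Ky Fan but with convex compactness replacing compactness. First I would fix an arbitrary real number $c$ strictly below $\sup_{f\in C}\inf_{g\in D}\Phi(f,g)$ (if that supremum is $-\infty$ the inequality ``$\geq$'' in \eqref{equ:supinf} is trivial, and ``$\leq$'' is automatic, so we may assume it is finite) and define, for each $f\in C$, the set
\[ F(f)=\sets{g\in D}{\Phi(f,g)\leq c}. \]
By property~(2), $F(f)$ is convex and closed (hence a closed convex subset of the convexly compact set $D$). The KKM property for $\set{F(f)}_{f\in C}$ reduces to the statement that for any finite $\set{f_1,\dots,f_n}\subseteq C$ and any $g\in\conv\set{f_1,\dots,f_n}$... wait, that is not quite the right indexing; the KKM covering condition we must verify is $\conv\set{f_1,\dots,f_n}$ — but $\conv$ here lives in $C\subseteq\lzer_+$ while $F(f)\subseteq D$, so the geometry does not match directly. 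The correct move is the standard one: verify that for every finite convex combination $\bar f=\sum_i\ld_i f_i\in C$ one has $\bar f$ lying in the ``simplex'' spanned by the $f_i$, and then use the finite-dimensional minimax theorem (von Neumann) on $\conv\set{f_1,\dots,f_n}\times D$ — but $D$ is infinite-dimensional, so instead one applies the finite KKM theorem directly on the simplex $\Delta^{n-1}$ via the map $(\ld_1,\dots,\ld_n)\mapsto\sum\ld_i f_i$. Concretely: suppose the KKM condition failed, i.e. there were $\bar f=\sum_{i=1}^n\ld_i f_i$ with $\Phi(\bar f,g)>c$ for all $g\in\cup_i F(f_i)$; combined with concavity of $\Phi(\cdot,g)$ one derives a contradiction by a finite-dimensional KKM/Sperner argument applied to the closed subsets $S_i=\sets{(\mu_1,\dots,\mu_n)\in\Delta^{n-1}}{\Phi(\sum_j\mu_j f_j,\,\cdot\,)\leq c\text{ somewhere forcing }\sum\mu_jf_j\in\text{... }}$. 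This bookkeeping is the routine heart of Ky Fan's inequality and I would present it compactly.

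Having established that $\set{F(f)}_{f\in C}$ has the KKM property with each $F(f)$ convex, and that $F(f_0)\subseteq D$ is convexly compact for any fixed $f_0\in C$ (being a closed convex subset of the convexly compact $D$, by Definition~\ref{def:convex-compact}), Theorem~\ref{thm:KKM} gives $\bigcap_{f\in C}F(f)\neq\emptyset$. Any $g^*$ in this intersection satisfies $\Phi(f,g^*)\leq c$ for all $f\in C$, hence $\sup_{f\in C}\Phi(f,g^*)\leq c$, hence $\inf_{g\in D}\sup_{f\in C}\Phi(f,g)\leq c$. Letting $c\uparrow\sup_{f\in C}\inf_{g\in D}\Phi(f,g)$ yields
\[ \inf_{g\in D}\sup_{f\in C}\Phi(f,g)\leq\sup_{f\in C}\inf_{g\in D}\Phi(f,g), \]
and since the reverse inequality $\sup\inf\leq\inf\sup$ is always true, \eqref{equ:supinf} follows; call the common value $v$.

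For the saddle point, I would argue that both the outer sup and the outer inf are attained. For the inf: the family $\sets{g\in D}{\sup_{f\in C}\Phi(f,g)\leq v+1/n}$ is, for each $n$, a nonempty (by \eqref{equ:supinf}) closed convex subset of the convexly compact $D$, and these sets are nested, so they have the finite intersection property; convex compactness of $D$ then produces $g_0$ with $\sup_{f\in C}\Phi(f,g_0)\leq v$. Symmetrically, using that $C$ is convexly compact and $\Phi(\cdot,g)$ is upper semi-continuous, the nested nonempty closed convex sets $\sets{f\in C}{\inf_{g\in D}\Phi(f,g)\geq v-1/n}$ yield $f_0$ with $\inf_{g\in D}\Phi(f_0,g)\geq v$. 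Then for all $(f,g)\in C\times D$,
\[ \Phi(f,g_0)\leq\sup_{f'\in C}\Phi(f',g_0)\leq v\leq\inf_{g'\in D}\Phi(f_0,g')\leq\Phi(f_0,g), \]
and taking $f=f_0$, $g=g_0$ gives $\Phi(f_0,g_0)=v$, so $(f_0,g_0)$ is the desired saddle point.

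The main obstacle is the verification of the KKM covering condition (item~\ref{ite:KKM-two} of Definition~\ref{def:KKM-property}) for the family $\set{F(f)}_{f\in C}$: unlike in the locally convex Ky Fan setup, one must be careful that the convex combinations are taken in $C\subseteq\lzer_+$ and that the finite-dimensional KKM theorem (Theorem~\ref{thm:KKM-original}) is invoked on the abstract simplex via the affine map $(\ld_i)\mapsto\sum_i\ld_i f_i$, using only concavity and upper semi-continuity of $f\mapsto\Phi(f,g)$ along that finite-dimensional slice — where ordinary continuity on a simplex is automatic, so no local convexity of $\lzer$ is needed. Everything else is a routine application of convex compactness to nested families of closed convex sets.
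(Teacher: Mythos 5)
There is a genuine gap, and it sits exactly at the step you yourself flag and then defer. Your plan is to apply Theorem \ref{thm:KKM} to the family $F(f)=\sets{g\in D}{\Phi(f,g)\leq c}$, indexed by $f\in C$ but consisting of subsets of $D$. The KKM property of Definition \ref{def:KKM-property} requires the covering condition $\conv\set{x_1,\dots,x_n}\subseteq\cup_i F(x_i)$, which presupposes that the index points and the sets live in the same set; with indices in $C$ and sets in $D$ this condition is not even meaningful (let alone verifiable), so Theorem \ref{thm:KKM} simply does not apply to your family. You notice the mismatch but the proposed repair never materializes: the passage ending in ``$S_i=\sets{\dots}{\dots\text{somewhere forcing}\dots}$'' and ``I would present it compactly'' is a placeholder where the actual content of the proof should be. What is needed there is the finite-intersection property of $\set{F(f)}_{f\in C}$, i.e.\ that for any $f_1,\dots,f_n$ there is $g\in D$ with $\max_i\Phi(f_i,g)\leq c$; this is not routine bookkeeping --- it is essentially a finite-dimensional minimax theorem (Kneser/Fan on $\Delta^{n-1}\times D$) or an equivalent KKM argument, and it is the heart of the matter. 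Without it, nothing forces $\cap_f F(f)\neq\emptyset$.

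The paper resolves precisely this indexing obstruction by a product construction: it forms the direct sum $\hat\Omega=\set{1,2}\times\Omega$, embeds $C\times D$ as the convexly compact set $C\oplus D\subseteq\hlzer_+$, and considers the \emph{diagonal} family $G_{f\oplus g}=\sets{f'\oplus g'\in C\oplus D}{\Phi(f,g')-\Phi(f',g)\leq 0}$, which is indexed by points of $C\oplus D$ itself. For that family the KKM covering condition is a two-line consequence of concavity in $f$ and convexity in $g$, Theorem \ref{thm:KKM} applies verbatim, and any point of the total intersection is already the saddle point (with \eqref{equ:supinf} as an immediate corollary). Your closing arguments --- obtaining \eqref{equ:supinf} by letting $c$ increase, and then extracting $f_0$ and $g_0$ from the nested closed convex sets $\sets{g\in D}{\sup_f\Phi(f,g)\leq v+1/n}$ and $\sets{f\in C}{\inf_g\Phi(f,g)\geq v-1/n}$ via convex compactness --- are correct as far as they go, but they all rest on the unproved first step, so the proof as written is incomplete. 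To fix it, either adopt the paper's direct-sum trick, or supply a genuine proof of the finite-intersection property (e.g.\ via Kneser's minimax theorem on $\Delta^{n-1}\times D$) and then conclude by convex compactness of $D$ alone rather than by Theorem \ref{thm:KKM}.
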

\begin{proof}
  Let $(\hat{\Omega},\hat{\FF},\hat{\PP})$ be a direct sum of
  $(\Omega,\FF,\PP)$ and a copy of itself. More precisely, we set
  $\hat{\Omega}=\set{1,2}\times\Omega$; $\hat{\FF}$ is the
  $\sigma$-algebra on $\hat{\Omega}$ generated by the sets of the form
  $\set{i}\times A$, $i=1,2$, $A\in\FF$, and $\hat{\PP}$ the unique
  probability measure on $\hat{\FF}$ satisfying
  $\hat{\PP}[\set{i}\times A] =\tot \PP[A]$, for $i=1,2$ and 
  $A\in\FF$. Then, a pair $(f,g)$ in $\lzer_+(\Omega,\FF,\PP)\times
  \lzer_+(\Omega,\FF,\PP)$ can be identified with the element $f\oplus g$
  of $\hlzer_+=\lzer_+(\hat{\Omega},\hat{\FF},\hat{\PP})$ in the
  following way: $(f\oplus g) (i,\omega) =f(\omega)$ if $i=1$, and
  $(f\oplus g)(i,\omega)=g(\omega)$ if $i=2$. 

  For $f\oplus g\in \hlzer_+$, define $C\oplus D=\sets{f\oplus g\in
    \hlzer_+}{(f,g)\in C\times D}$, together with the family of its subsets
\[ G_{f\oplus g}=\sets{f'\oplus g'\in C\oplus D}{ 
\Phi(f,g')-\Phi(f',g)\leq 0},\ f\oplus g\in C\oplus D.\] 
By properties {\em 1.} and {\em 2.} of the
function $\Phi$ in the statement of the theorem, the set $G_{f\oplus g}$
is a convex subset of $C\oplus D$. Moreover, $f_n\oplus g_n \to
f\oplus g$ in $\hlzer$ if  and only if $f_n\to f$ and $g_n\to g$ in
$\lzer$. Therefore, we can use upper semi-continuity of the maps 
$\Phi(\cdot,g)$ and $-\Phi(f,\cdot)$, valid for any $f\in C$ and $g\in
D$,  to conclude that 
 $G_{f\oplus g}$ is closed in $\hat{\PP}$ for each $f\oplus g\in C\oplus
D$. Finally, since $\hat{\PP}[ f\oplus g\geq M]= \tot( \PP[f\geq
M]+\PP[g\geq M])$, it is clear that $C\oplus D$ is bounded in
probability. Therefore, $G_{f\oplus g}$ is a family of closed and convex
subsets of a convex-compact set $C\oplus D$. Next, we
state and prove an auxiliary claim.
\begin{claim}
\label{cla:KKM}
The family $\sets{G_{f\oplus g}}{f\oplus g\in C\oplus D}$ has the KKM property.
\end{claim} 
To prove Claim \ref{cla:KKM}, we assume, to the contrary, that there
exist $\tilde{f}\oplus\tilde{g}\in C\oplus D$, a finite family
$f_1\oplus g_1,\dots, f_m\oplus g_m$ in $C\oplus D$ and a set of
non-negative weights $\{\alpha_k\}_{k\in\ft{1}{m}}$ with $\sum_{k=1}^m
\alpha_k=1$, such that $\tilde{f}\oplus\tilde{g}=\sum_{k=1}^m \alpha_k\,
 f_k\oplus g_k $, but $\tilde{f}\oplus\tilde{g}\not\in G_{f_k\oplus
  g_k}$ for $k=\ft{1}{m}$, i.e., $\Phi(f_k,\tilde{g})>\Phi(\tilde{f},
g_k)$, for $k=\ft{1}{m}$.  Then, we have the following string of
inequalities:
\[    \Phi(\tilde{f},\tilde{g})\geq  
\sum_{k=1}^m \alpha_k \Phi(f_k, \tilde{g}) > \sum_{k=1}^n
\alpha_k \Phi(\tilde{f}, g_k)\geq \Phi(\tilde{f},\tilde{g}). \]
Evidently, this is 
a contradiction. Therefore, Claim \ref{cla:KKM} is established.

We continue the proof invoking  of  Theorem 
\ref{thm:KKM}; its assumptions are satisfied, thanks to
 Claim \ref{cla:KKM} and the discussion
preceding it. By Theorem \ref{thm:KKM}, there exists $f_0\oplus g_0\in
C\oplus D$ such that $(f_0,g_0)\in G_{f\oplus g}$ for all $f\in C$ and
all $g\in D$, i.e., 
$ \Phi(f,g_0)\leq \Phi(f_0,g)$ for all $f\in C$ and $g\in
D$. Substituting $f=f_0$ or $g=g_0$, we obtain
\[ \Phi(f,g_0)\leq \Phi(f_0,g_0)\leq \Phi(f_0,g),\text{ for all } f\in
C,\, g\in D.\]
The  last statement of the theorem -  equation \eqref{equ:supinf} -   follows
readily.
\end{proof}

\subsection{An excess-demand theorem  under convex compactness}
Our last application is an extension of the Walrasian excess-demand theorem. 
Similarly to the situations described
above, versions of excess-demand theorem have been proved in various
settings (see, e.g., \cite{ArrDeb54}, \cite{Mck59} and Exercise C.7,
p. 179 in \cite{GraDug03}), but, to the best of our knowledge, always
under the assumption of local convexity and compactness.

Typically, the excess-demand theorem is applied to a function $F$ of
the type $F(x,y)=\scl{\Delta(x)}{y}$, where $x$ is thought of as a
price-system, $\Delta(x)$ is the excess aggregate demand (or a
zero-preserving transformation thereof) for the bundle of all
commodities, and $y$ is a test function. The conclusion $F(x_0,y)\leq
0$, for every $y$, is then used to establish the equality
$\Delta(x_0)=0$ - a stability (equilibrium) condition for the market
under consideration.
\begin{theorem}
  \label{thm:excess-dem}
  Let $C$ be a convexly compact subset of a topological vector
  space $X$, and let $D\subseteq C$ be convex and closed.  
Let the mapping $F:C\times D\to \R$
  satisfy the following properties
  \begin{enumerate}
  \item \label{ite:losed} for each $y\in D$, the set $\sets{x\in
      C}{ F(x,y)\leq 0}$ is closed and convex,
  \item \label{ite:conv-in-y} for each $x\in C$, the function $y\mapsto F(x,y)$ is concave,
    and 
  \item \label{ite:Walras} for each $y\in D$, $F(y,y)\leq 0$.
  \end{enumerate}
  Then there exists $x_0\in C$ such that $F(x_0,y)\leq 0$, for all
  $y\in D$.
\end{theorem}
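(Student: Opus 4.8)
The plan is to recognize the statement as a direct application of the KKM-type result Theorem \ref{thm:KKM}. For each $y\in D$ I would introduce the lower-level set
\[ \Phi(y)=\sets{x\in C}{F(x,y)\leq 0},\]
so that the desired conclusion --- existence of $x_0\in C$ with $F(x_0,y)\leq 0$ for all $y\in D$ --- is exactly the assertion $\bigcap_{y\in D}\Phi(y)\neq\emptyset$. By hypothesis \ref{ite:losed} each $\Phi(y)$ is a closed and convex subset of $C$, hence of $X$, and by hypothesis \ref{ite:Walras} one has $y\in\Phi(y)$, so every member of the family is non-empty.

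First I would check that $\set{\Phi(y)}_{y\in D}$ has the KKM property of Definition \ref{def:KKM-property} with index set $B=D$. The closedness requirement is hypothesis \ref{ite:losed}. For the convex-hull requirement, take $y_1,\dots,y_n\in D$ and weights $\lambda_1,\dots,\lambda_n\geq 0$ with $\sum_{i=1}^n\lambda_i=1$, and put $y=\sum_{i=1}^n\lambda_i y_i$; convexity of $D$ gives $y\in D$. If $y$ lay outside every $\Phi(y_i)$ then $F(y,y_i)>0$ for all $i$, and concavity of $z\mapsto F(y,z)$ from hypothesis \ref{ite:conv-in-y}, together with $\sum_i\lambda_i=1$, would yield
\[ F(y,y)=F\Big(y,\sum_{i=1}^n\lambda_i y_i\Big)\geq\sum_{i=1}^n\lambda_i F(y,y_i)>0,\]
contradicting hypothesis \ref{ite:Walras}. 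Hence $y\in\bigcup_{i=1}^n\Phi(y_i)$, so $\conv\set{y_1,\dots,y_n}\subseteq\bigcup_{i=1}^n\Phi(y_i)$, which is the convex-hull condition.

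Second I would produce one convexly compact member of the family, as required by Theorem \ref{thm:KKM}. Fixing any $y_0\in D$, the set $\Phi(y_0)$ is a closed convex subset of the convexly compact set $C$; since any family of closed convex subsets of $\Phi(y_0)$ with the finite-intersection property is also such a family inside $C$, Definition \ref{def:convex-compact} applied to $C$ shows that $\Phi(y_0)$ is itself convexly compact. Now Theorem \ref{thm:KKM} applies to the family $\set{\Phi(y)}_{y\in D}$ of convex subsets of $X$ and delivers $\bigcap_{y\in D}\Phi(y)\neq\emptyset$; any $x_0$ in this intersection is the required point.

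I do not anticipate a genuine obstacle: the argument is essentially a translation of the hypotheses into the vocabulary of the KKM property. The two places calling for a little care are the verification that the strict inequality is preserved under the convex combination in the KKM step --- which works because the weights are non-negative and sum to one, so at least one strictly positive term appears with no negative terms to cancel it --- and the elementary remark that a closed convex subset of a convexly compact set inherits convex compactness, which is what lets the single convex-compactness hypothesis on $C$ feed Theorem \ref{thm:KKM}.
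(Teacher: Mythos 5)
Your proposal is correct and follows essentially the same route as the paper: define the lower-level sets $\sets{x\in C}{F(x,y)\leq 0}$, verify the KKM property via the concavity/Walras contradiction, and invoke Theorem \ref{thm:KKM}. The only difference is that you explicitly check the hypothesis of Theorem \ref{thm:KKM} that some member of the family is convexly compact (by noting that a closed convex subset of a convexly compact set inherits convex compactness), a point the paper's proof leaves implicit.
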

\begin{proof} 
  Define the family $\set{F_y}_{y\in D}$ of closed and convex subsets
  of $C$ by $F_y=\{ x\in C\,:\, F(x,y)\leq 0\}$. In order to show that
  it has the KKM property, we assume, to the contrary, that there
  exist $y_1,\dots, y_m$ in $D$ and a set of non-negative weights
  $\alpha_1,\dots, \alpha_m$ with $\sum_{k=1}^m \alpha_k=1$ such that
  $\tilde{y}:=\sum_{k=1}^m \alpha_k y_k \not\in \cup_{k=1}^m F_{y_k}$,
  i.e., $F(\tilde{y},y_k)>0$, for $k=\ft{1}{m}$.  Then, by
  property {\em \ref{ite:conv-in-y}.} of $F$, 
we have $0 \geq F(\tilde{y},\tilde{y})
  \geq \sum_{k=1}^m \alpha_k F(\tilde{y}, y_k) > 0$ -- a
  contradiction. Therefore, by Theorem \ref{thm:KKM}, there exists
  $x_0\in C$ such that $x_0\in F_y$ for all $y\in D$, i.e.,
  $F(x_0,y)\leq 0$, for all $y\in D$.
\end{proof}
It is outside the scope of the present paper to construct in detail
concrete equilibrium situations where the excess-demand function above
is used to guarantee the existence of a Walrasian equilibrium - this will
be a topic of our future work.

\ifx \cprime \undefined \def \cprime {$\mathsurround=0pt '$}\fi\ifx \k
  \undefined \let \k = \c \fi\ifx \scr \undefined \let \scr = \cal \fi\ifx
  \soft \undefined \def \soft {\relax}\fi


\begin{thebibliography}{27}
\providecommand{\natexlab}[1]{#1}
\providecommand{\url}[1]{\texttt{#1}}
\expandafter\ifx\csname urlstyle\endcsname\relax
  \providecommand{\doi}[1]{doi: #1}\else
  \providecommand{\doi}{doi: \begingroup \urlstyle{rm}\Url}\fi

\bibitem[Arrow and Debreu(1954)]{ArrDeb54}
K.~Arrow and G.~Debreu.
\newblock Existence of an equilibrium for a competitive economy.
\newblock \emph{Econometrica}, 22:\penalty0 265--290, 1954.

\bibitem[Bewley(1972)]{Bew72}
T.~Bewley.
\newblock Existence of equilibria in economies with infinitely many
  commodities.
\newblock \emph{Journal of Economic Theory}, \penalty0 (43):\penalty0 514--540,
  1972.

\bibitem[Billingsley(1995)]{Bil95}
P.~Billingsley.
\newblock \emph{Probability and measure}.
\newblock Wiley Series in Probability and Mathematical Statistics. John Wiley
  \& Sons Inc., New York, third edition, 1995.
\newblock A Wiley-Interscience Publication.

\bibitem[Buhvalov and Lozanovski{\u\i}(1973)]{BuhLoz73}
A.~V. Buhvalov and G.~Ja. Lozanovski{\u\i}.
\newblock Sets closed in measure in spaces of measurable functions.
\newblock \emph{Dokl. Akad. Nauk SSSR}, 212:\penalty0 1273--1275, 1973.

\bibitem[Cox and Huang(1989)]{CoxHua89}
J.~C. Cox and C.~F. Huang.
\newblock Optimal consumption and portfolio policies when asset prices follow a
  diffusion process.
\newblock \emph{J. Economic Theory}, 49:\penalty0 33--83, 1989.

\bibitem[Cvitani{\' c} et~al.(2001)Cvitani{\' c}, Schachermayer, and
  Wang]{CviSchWan01}
J.~Cvitani{\' c}, W.~Schachermayer, and H~Wang.
\newblock Utility maximization in incomplete markets with random endowment.
\newblock \emph{Finance and Stochastics}, 5:\penalty0 237--259, 2001.

\bibitem[Delbaen and Schachermayer(1999)]{DelSch99}
F.~Delbaen and W.~Schachermayer.
\newblock A compactness principle for bounded sequences of martingales with
  applications.
\newblock In \emph{Seminar on Stochastic Analysis, Random Fields and
  Applications (Ascona, 1996)}, volume~45 of \emph{Progr. Probab.}, pages
  137--173. Birkh{\"a}user, Basel, 1999.

\bibitem[Delbaen and Schachermayer(2006)]{DelSch06}
F.~Delbaen and W.~Schachermayer.
\newblock \emph{The Mathematics of Arbitrage}.
\newblock Springer Finance. Springer-Verlag, Berlin, 2006.

\bibitem[Delbaen and Schachermayer(1994)]{DelSch94}
F.~Delbaen and W.~Schachermayer.
\newblock A general version of the fundamental theorem of asset pricing.
\newblock \emph{Math. Ann.}, 300\penalty0 (3):\penalty0 463--520, 1994.

\bibitem[Girardi(1991)]{Gir91}
M.~ Girardi.
\newblock Weak vs.\ norm compactness in {$L\sb 1$}: the {B}occe criterion.
\newblock \emph{Studia Math.}, 98\penalty0 (1):\penalty0 95--97, 1991.
\newblock ISSN 0039-3223.

\bibitem[Granas and Dugundji(2003)]{GraDug03}
A.~Granas and J.~Dugundji.
\newblock \emph{Fixed point theory}.
\newblock Springer Monographs in Mathematics. Springer-Verlag, New York, 2003.
\newblock ISBN 0-387-00173-5.

\bibitem[He and Pearson(1991)]{HePea91a}
H.~He and N.~D. Pearson.
\newblock Consumption and portfolio policies with incomplete markets and
  short-sale constraints: the infinite-dimensional case.
\newblock \emph{Journal of Economic Theory}, 54:\penalty0 259--304, 1991.

\bibitem[Kalton et~al.(1984)Kalton, Peck, and Roberts]{KalPecRob84}
N.~J. Kalton, N.~T. Peck, and James~W. Roberts.
\newblock \emph{An {$F$}-space sampler}, volume~89 of \emph{London Mathematical
  Society Lecture Note Series}.
\newblock Cambridge University Press, Cambridge, 1984.
\newblock ISBN 0-521-27585-7.

\bibitem[Karatzas and {\v Z}itkovi{\' c}(2003)]{KarZit03}
I~Karatzas and G.~{\v Z}itkovi{\' c}.
\newblock Optimal consumption from investment and random endowment in
  incomplete semimartingale markets.
\newblock \emph{Ann. Probab.}, 31\penalty0 (4):\penalty0 1821--1858, 2003.

\bibitem[Karatzas et~al.(1990)Karatzas, Lehoczky, Shreve, and
  Xu]{KarLehShrXu90}
I.~Karatzas, J.~P. Lehoczky, S.~E. Shreve, and G.-L.~Xu.
\newblock Optimality conditions for utility maximization in an incomplete
  market.
\newblock In \emph{Analysis and optimization of systems (Antibes, 1990)},
  volume 144 of \emph{Lecture Notes in Control and Inform. Sci.}, pages 3--23.
  Springer, Berlin, 1990.

\bibitem[Kardaras and \v{Z}itkovi\'{c}(2007)]{KarZit07}
C.~Kardaras and G.~\v{Z}itkovi\'{c}.
\newblock Stability of the utility maximization problem with random endowment
  in incomplete markets.
\newblock to appear in Mathematical Finance, 2007.
\newblock URL \url{arxiv:0706.0482}.

\bibitem[Knaster et~al.(1929)Knaster, Kuratowski, and
  Mazurkiewicz]{KnaKurMaz29}
B.~Knaster, C.~Kuratowski, and S.~Mazurkiewicz.
\newblock Ein {B}eweis des {F}ixpunktsatsez f{\" u}r n-dimensionale {S}implexe.
\newblock \emph{Fund. Math.}, \penalty0 (XIV):\penalty0 132--137, 1929.

\bibitem[Koml{\'o}s(1967)]{Kom67}
J.~Koml{\'o}s.
\newblock A generalization of a problem of {S}teinhaus.
\newblock \emph{Acta Math. Acad. Sci. Hungar.}, 18:\penalty0 217--229, 1967.

\bibitem[Kramkov and Schachermayer(1999)]{KraSch99}
D.~Kramkov and W.~Schachermayer.
\newblock The asymptotic elasticity of utility functions and optimal investment
  in incomplete markets.
\newblock \emph{Ann. Appl. Probab.}, 9\penalty0 (3):\penalty0 904--950, 1999.

\bibitem[Larsen and {\v Z}itkovi{\' c}(2007)]{LarZit06a}
K.~Larsen and G.~{\v Z}itkovi{\' c}.
\newblock Stability of utility-maximization in incomplete markets.
\newblock \emph{Stochastic Processes and their Applications}, 117\penalty0
  (11):\penalty0 1642--1662, 2007.

\bibitem[Mas-Colell and Zame(1991)]{MasZam91}
A.~Mas-Colell and W.~R. Zame.
\newblock Equilibrium theory in infinite-dimensional spaces.
\newblock In \emph{Handbook of mathematical economics, Vol.\ IV}, volume~1 of
  \emph{Handbooks in Econom.}, pages 1835--1898. North-Holland, Amsterdam,
  1991.

\bibitem[McKenzie(1959)]{Mck59}
L.~W. McKenzie.
\newblock On the existence of general equilibrium for a competitive market.
\newblock \emph{Econometrica}, 27:\penalty0 54--71, 1959.

\bibitem[Merton(1969)]{Mer69}
R.~C. Merton.
\newblock Lifetime portfolio selection under uncertainty: the continuous-time
  case.
\newblock \emph{Rev. Econom. Statist.}, 51:\penalty0 247--257, 1969.

\bibitem[Merton(1971)]{Mer71}
R.~C. Merton.
\newblock Optimum consumption and portfolio rules in a conitinuous-time model.
\newblock \emph{J. Economic Theory}, 3:\penalty0 373--413, 1971.

\bibitem[Niki{\v{s}}in(1971)]{Nik71}
E.~M. Niki{\v{s}}in.
\newblock A certain problem of {B}anach.
\newblock \emph{Dokl. Akad. Nauk SSSR}, 196:\penalty0 774--775, 1971.

\bibitem[Pliska(1986)]{Pli86}
S.~R. Pliska.
\newblock A stochastic calculus model of continuous trading: optimal portfolio.
\newblock \emph{Math. Oper. Res.}, 11:\penalty0 371--382, 1986.

\bibitem[Schwartz(1986)]{Sch86}
M.~Schwartz.
\newblock New proofs of a theorem of {K}oml{\'o}s.
\newblock \emph{Acta Math. Hung.}, 47:\penalty0 181--185, 1986.

\end{thebibliography}

\end{document}